

\documentclass[12pt]{amsart}
\usepackage{amsthm,amsmath,a4,amssymb,verbatim,url}


\vfuzz2pt 
\hfuzz2pt 
\newtheorem{thm}{Theorem}[section]
\newtheorem{cor}[thm]{Corollary}
\newtheorem{lem}[thm]{Lemma}
\newtheorem{prop}[thm]{Proposition}
\theoremstyle{definition}

\theoremstyle{remark}
\newtheorem{rem}[thm]{Remark}
\newtheorem{ex}[thm]{Example}
\numberwithin{equation}{section}

\newcommand{\GL}{\textnormal{GL}}

\newcommand{\Z}{\mathbf{Z}}

\newcommand{\Q}{\mathbf{Q}}

\begin{document}

\author{Khalid Bou-Rabee}
\address{The City College of New York, 160 Convent Ave, New York, NY, 10031 USA}
\email{khalid.math@gmail.com}

\author{Yves Cornulier}
\address{CNRS -- D\'epartement de Math\'ematiques, Universit\'e Paris-Sud, 91405 Orsay, France}

\email{yves.cornulier@math.u-psud.fr}
\subjclass[2010]{Primary 20E26; Secondary 11C99, 13B25, 20F65}

\title{Systolic growth of linear groups}
\date{August 21, 2014}

\thanks{K.B.~is supported in part by NSF DMS-1405609; Y.C.~is supported in part by ANR GSG 12-BS01-0003-01}



\begin{abstract}
We prove that the residual girth of any finitely generated linear group is at most exponential.
This means that the smallest finite quotient in which the $n$-ball injects has at most exponential size. 
If the group is also not virtually nilpotent, it follows that the residual girth is precisely exponential.
\end{abstract}
\maketitle

\section{Introduction}

Let $\Gamma$ be a group with a finite generating subset $S$, and $|\cdot|_S$ the corresponding word length. We assume for convenience that $S$ is symmetric and contains the unit, so that $S^n$ is equal to the $n$-ball. The following three functions are attached to $(\Gamma,S)$:
\begin{itemize}
\item the growth: the cardinal $b_{\Gamma,S}(n)$ of $S^n$;
\item the systolic growth: the function $\sigma_{\Gamma,S}$ mapping $n$ to the smallest $k$ such that some subgroup $H$ of index $k$ contains no nontrivial element of the $n$-ball; if no such $k$ exists, we define it as $+\infty$;
\item the residual girth, or normal systolic growth $\sigma'_{\Gamma,S}$: same definition, with the additional requirement that $H$ is normal.
\end{itemize}

The growth is always defined and is at most exponential, while the systolic growth and residual girth take finite values if and only if $\Gamma$ is residually finite, and in this case they can be larger than exponential, as the example in \cite{BSe} show. Furthermore, we have the obvious inequalities \[b_{\Gamma,S}(n)\le\sigma_{\Gamma,S}(2n+1)\le\sigma^\triangleleft_{\Gamma,S}(2n+1).\] The asymptotic behavior of these functions, for finitely generated groups, does not depend on the finite generating subset.
  
A simple example for the residual girth grows strictly faster than the systolic growth is the case of the integral Heisenberg group, for which the growth and systolic growth behaves as $n^4$ while the residual girth grows as $n^6$ (see \cite{BS,C}). Also the systolic growth may grow faster than the growth and actually can grow arbitrarily fast. We show here that in linear groups, this is not the case.

\begin{thm}\label{main}
Assume that $\Gamma$ admits a faithful finite-dimensional representation over a field (or a product of fields).
Then the residual girth (and hence the systolic growth) of $\Gamma$ are at most exponential. In particular, if $\Gamma$ is not virtually nilpotent, then its residual girth and its systolic growth are exponential.
\end{thm}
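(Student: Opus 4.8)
The plan is to reduce to a well-controlled linear group and then produce explicit finite quotients of exponential size that separate the identity from every nontrivial element of the $n$-ball. First I would fix a faithful representation $\Gamma \hookrightarrow \GL_d(A)$, where $A$ is a finitely generated domain; since $\Gamma$ is finitely generated, all matrix entries of the generators lie in a finitely generated subring $R$ of the field (or product of fields), so we may assume $A=R$ is a finitely generated integral domain of some Krull dimension. The key quantitative point is that entries of elements of $S^n$ are polynomials of bounded degree in the generator entries, so after clearing denominators their numerators and the relevant denominators grow at most \emph{exponentially} in $n$; this is the source of the exponential bound.

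The heart of the argument is a reduction modulo a suitable ideal. For a nontrivial $\gamma \in S^n$, faithfulness means $\gamma - 1 \neq 0$ in $M_d(R)$, so some entry $r_\gamma \in R$ is nonzero; I want one finite quotient of $\GL_d(R)$ that kills the identity but detects $r_\gamma$ for \emph{all} such $\gamma$ simultaneously, with index exponential in $n$. The standard device is to choose a maximal ideal $\mathfrak{m}$ of $R$ avoiding the finite set of entries arising from the ball, reduce to $\GL_d(R/\mathfrak{m})$, and bound $|R/\mathfrak{m}|$. Concretely I would use the fact that a finitely generated domain has residue fields that are finite, and that one can find such an $\mathfrak{m}$ in which none of the at most exponentially many (and exponentially large) nonzero entries $r_\gamma$ vanishes, with the field $R/\mathfrak{m}$ of size controlled by those entries. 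The resulting congruence subgroup $H=\Ker(\GL_d(R)\to\GL_d(R/\mathfrak{m}))$ is normal of index at most $|R/\mathfrak{m}|^{d^2}$, and meets $S^n$ trivially; bounding $|R/\mathfrak{m}|^{d^2}$ exponentially in $n$ gives the theorem.

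Once the at-most-exponential bound on the residual girth is established, the final clause follows immediately from the elementary inequalities recorded in the introduction together with the Milnor--Wolf dichotomy: a finitely generated linear group that is not virtually nilpotent has exponential word growth $b_{\Gamma,S}(n)$ (by the Tits alternative it either contains a free subgroup or is virtually solvable, and a virtually solvable linear group that is not virtually nilpotent has exponential growth). Since $b_{\Gamma,S}(n)\le\sigma_{\Gamma,S}(2n+1)\le\sigma^\triangleleft_{\Gamma,S}(2n+1)$, exponential growth forces the systolic growth and residual girth to be at least exponential, and combined with the upper bound they are precisely exponential.

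The main obstacle is the simultaneous separation step: I must find a single maximal ideal $\mathfrak{m}$ whose residue field is small (at most exponential in $n$) yet avoids all the nonzero entries $r_\gamma$ produced by the $n$-ball. In the one-variable case this is a counting argument over primes, but in higher Krull dimension one needs an effective \emph{Nullstellensatz}-type or Chebotarev-type estimate to guarantee the existence of a good reduction of controlled size — this is where the genuine work lies, and I expect the proof to invoke an arithmetic height or resultant bound to make the denominator and entry growth explicit enough to pin down the size of $R/\mathfrak{m}$.
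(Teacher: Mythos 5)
Your overall strategy coincides with the paper's: bound the entries of elements of $S^n$ (they grow at most exponentially), take the nonzero entries $r_\gamma$ of $\gamma-1$ for $\gamma\in S^{2n}$, and find a finite quotient field of controlled size in which all of them survive; the congruence kernel is then the desired normal subgroup of exponential index. Your lower-bound argument (Tits alternative plus Milnor--Wolf, combined with $b_{\Gamma,S}(n)\le\sigma^\triangleleft_{\Gamma,S}(2n+1)$) is also exactly what the paper does. But the proposal has a genuine gap precisely at the step you yourself flag as ``where the genuine work lies'': you never actually produce the maximal ideal $\mathfrak{m}$ with $|R/\mathfrak{m}|$ exponentially bounded, and you leave it to an unspecified ``effective Nullstellensatz-type or Chebotarev-type estimate.'' Without that, there is no proof. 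Moreover, working with an arbitrary finitely generated domain $R$ makes this harder than necessary: a general such $R$ need not obviously have small residue fields avoiding a prescribed element, and quantifying this directly would indeed require some effective arithmetic input.

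The paper's resolution is more elementary and worth internalizing. It does not reduce modulo maximal ideals of the ring of entries itself; instead it uses that a finitely generated field is a finite extension of $\F_q(t_1,\dots,t_k)$ or $\Q(t_1,\dots,t_k)$, so after enlarging $d$ one may assume $\Gamma\subset\GL_d(\F_q[t_1,\dots,t_k][Q^{-1}])$ (resp.\ with $\Z$ in place of $\F_q$). One then multiplies together all the nonzero entries $b(y-1)$ for $y\in T^{2n}$ (and $Q$) to get a single nonzero polynomial $x'_n$ whose degree in each variable is exponentially bounded, and shows it survives in a quotient field of size polynomial in its degree: for one variable this is a count of irreducible polynomials over $\F_q$ via Gauss's formula (a nonzero $P$ of degree $\le n$ cannot be divisible by every irreducible of degree $m-1$ once $q^{m-1}-q^{m-2}>n$), and the multivariable case follows by induction on $k$. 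In characteristic $0$ one first reduces modulo a prime $p_n$ of exponential size chosen by Bertrand's postulate to exceed all coefficients appearing in $T^{2n}$, and then applies the characteristic-$p$ argument. Finally, the product-of-fields case is not something you can ``assume away'' by taking $A$ a domain: the paper handles it by noting that the ring of entries is finitely generated and reduced, hence has finitely many minimal primes with trivial intersection, so $\Gamma$ embeds in a finite product of linear groups over fields, and residual girth is submultiplicative under products and monotone under subgroups. If you supply the irreducible-polynomial counting lemma (or an equivalent effective substitute) and the reduction to polynomial rings, your outline becomes the paper's proof.
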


Such a result was asserted by Gromov \cite[p.334]{Gro} for subgroups of $\mathrm{SL}_d(\Z)$, under some technical superfluous additional assumption (non-existence of nontrivial unipotent elements).

The proof of Theorem \ref{main} consists in finding small enough quotient fields of the ring of entries, while ensuring that the $n$-ball is mapped injectively. The argument can be simplified in case $\Gamma\subset\GL_d(\Q)$, since then reduction modulo $p$ for all $p$ large enough work with no further effort; in this case the finite quotients are explicit, while in the general case we only find a suitable quotient field using a counting argument.

\begin{ex}
The group $\Z\wr\Z$ has an exponential residual girth. Another example is $(\Z/6\Z)\wr\Z$, which is linear over a product of 2 fields, but not over a single field.
\end{ex}

\begin{rem}
Closely related functions are the residual finiteness growth, which maps $n$ to the smallest number $s_{\Gamma,S}(n)$ such that for every $g\in S^n\smallsetminus\{1\}$, there is a finite index subgroup of $\Gamma$ avoiding $g$, and $s_{\Gamma,S}^\triangleleft(n)$ defined in the same way with only normal finite index subgroups. For finitely generated group that are linear over a field, a polynomial upper bound for these functions is established in \cite{BM}, and in the case of higher rank arithmetic groups, the precise behavior is obtained in \cite{BK}: for instance, for $\mathrm{SL}_d(\Z)$ for $d\ge 3$, the normal residual finiteness growth grows as $n^{d^2-1}$.  
\end{rem}

\section{Preliminaries on polynomials over finite fields}

\begin{lem}\label{nmp}
Let $F$ be a finite field with $q$ elements. Given an integer $n\ge 1$, the number of irreducible monic polynomials of degree $n$ in $F[t]$ is $\le q^n/n$ and $\ge (q^n-q^{n-1})/n$.

\end{lem}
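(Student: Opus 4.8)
The plan is to derive both inequalities from the classical factorization identity for finite fields. Write $N(d)$ for the number of monic irreducible polynomials of degree $d$ in $F[t]$, so that $N(n)$ is the quantity to be estimated. The essential input is that $t^{q^n}-t$ is the product of all monic irreducibles over $F$ whose degree divides $n$: the field $\F_{q^n}$ is precisely the set of roots of $t^{q^n}-t$, every $\alpha\in\F_{q^n}$ has minimal polynomial over $F$ of some degree $d\mid n$, and conversely each monic irreducible of degree $d\mid n$ has all its roots in $\F_{q^n}$ and divides $t^{q^n}-t$ simply. Comparing degrees on both sides gives the counting identity
\[ q^n=\sum_{d\mid n} d\,N(d). \]
The upper bound is then immediate: isolating the term $d=n$ and discarding the remaining nonnegative terms yields $n\,N(n)\le q^n$, that is, $N(n)\le q^n/n$.

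For the lower bound I would apply M\"obius inversion to the counting identity, obtaining
\[ n\,N(n)=\sum_{d\mid n}\mu(d)\,q^{n/d}=q^n+\sum_{d\mid n,\,d>1}\mu(d)\,q^{n/d}. \]
Every divisor $d>1$ of $n$ satisfies $n/d\le n/2$, and the exponents $n/d$ arising here are distinct positive integers at most $\lfloor n/2\rfloor$; since $|\mu(d)|\le 1$, the error term is controlled by a geometric sum,
\[ \Big|\sum_{d\mid n,\,d>1}\mu(d)\,q^{n/d}\Big|\le\sum_{j=1}^{\lfloor n/2\rfloor}q^j=\frac{q^{\lfloor n/2\rfloor+1}-q}{q-1}. \]
It then suffices to verify the elementary inequality $\dfrac{q^{\lfloor n/2\rfloor+1}-q}{q-1}\le q^{n-1}$, after which subtracting the bound on the correction from $q^n$ gives $n\,N(n)\ge q^n-q^{n-1}$, as claimed.

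I expect the only delicate point to be this last inequality, precisely because the stated lower bound is sharp: it holds with \emph{equality} at $n=2$ (where $N(2)=(q^2-q)/2$ exactly), with the values $n\le 3$ checked by direct inspection, and for $n\ge 4$ it follows comfortably because the geometric sum has top exponent $\lfloor n/2\rfloor\le n-2$ and hence stays below $q^{n-1}$ for every $q\ge 2$. The algebraic identity is entirely standard; the genuine work is the bookkeeping that keeps the M\"obius correction terms uniformly under $q^{n-1}$, including the tight small cases where the bound is attained.
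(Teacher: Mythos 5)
Your proof is correct and follows essentially the same route as the paper: both rest on the Gauss/M\"obius formula $nN(n)=\sum_{d\mid n}\mu(d)q^{n/d}$ and control the correction terms by a geometric series whose top exponent is small enough to stay under $q^{n-1}$ (with the small cases $n\le 2$, resp.\ $n\le 3$, handled separately). The only cosmetic differences are that you derive the formula from the factorization of $t^{q^n}-t$ rather than citing it, and you obtain the upper bound directly from the degree identity $q^n=\sum_{d\mid n}dN(d)$ instead of from the inverted sum.
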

\begin{proof}
The case $n=1$ being trivial, we can assume $n\ge 1$.
By Gauss' formula this number $N_q(n)$ is equal to $(1/n)\sum_{d|n}\mu(n/d)q^d$, where $\mu$ is M\"obius' function. Let $p>1$ be the smallest prime divisor of $n$. Then 
\[\sum_{d|n}\mu(n/d)q^d=q^n-q^{n/p}+\sum_{d|n,d>p}\mu(n/d)q^d\le q^n-q^{n/p}+\sum_{d|n,d>p}q^d\]
\[\le q^n-q^{n/p}+\sum_{k=0}^{n/p-1}q^k\le q^n\]

A similar argument shows that $nN_q(n)\ge q^n-q^{1+n/p}$, which is $\ge q^n-q^{n-1}$ if $n\ge 3$; the cases $n\le 2$ being trivial. 
\end{proof}




\begin{lem}\label{surv}
Let $F$ be a field with $q$ elements. Let $P\in F[t]$ be a nonzero polynomial of degree $\le n$. Then $P$ survives in a quotient field of $F[t]$ of cardinal $\le 2nq$.

\end{lem}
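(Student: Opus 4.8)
The plan is to realize any quotient field of $F[t]$ as $F[t]/(f)$ for a monic irreducible $f$; such a quotient is a field of cardinal $q^{\deg f}$, and the image of $P$ is nonzero — i.e. $P$ ``survives'' — precisely when $f\nmid P$. So the task reduces to producing a monic irreducible $f$ of some small degree $d$ with $f\nmid P$ and $q^d\le 2nq$. Since $F[t]$ is a PID, these are all the quotient fields, so no generality is lost.

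The heart of the argument is a counting estimate at a fixed degree $d$. Because $\deg P\le n$, the polynomial $P$ has at most $n/d$ distinct monic irreducible factors of degree $d$. On the other hand, Lemma \ref{nmp} guarantees at least $(q^d-q^{d-1})/d$ monic irreducibles of degree $d$ in $F[t]$. Hence, as soon as $(q^d-q^{d-1})/d>n/d$, that is $q^{d-1}(q-1)>n$, there must exist a monic irreducible $f$ of degree $d$ not dividing $P$, and then $F[t]/(f)$ is the desired quotient.

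It remains to choose $d$ so that the resulting field is not too large. I would take $d$ to be the least positive integer with $q^{d-1}(q-1)>n$, which exists since $q\ge 2$. If $d=1$, the quotient has cardinal $q\le 2nq$ as $n\ge 1$. If $d\ge 2$, minimality gives $q^{d-2}(q-1)\le n$, hence $q^{d-1}(q-1)\le qn$, and therefore
\[ q^d=\frac{q}{q-1}\,q^{d-1}(q-1)\le \frac{q}{q-1}\,qn\le 2qn, \]
using $q/(q-1)\le 2$ for $q\ge 2$. In either case the quotient field has cardinal at most $2nq$, as required.

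The argument is essentially routine once it is set up, and I expect no serious obstacle. The only delicate point is calibrating $d$ so that the lower bound on the irreducible count from Lemma \ref{nmp} strictly beats the at-most-$n/d$ factors of $P$, while simultaneously keeping $q^d\le 2nq$. This is precisely why the sharp lower bound $(q^d-q^{d-1})/d$ is worth having rather than a cruder estimate, and why the constant $2$ in the statement is the natural one, arising from $q/(q-1)\le 2$.
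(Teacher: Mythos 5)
Your proof is correct and follows essentially the same route as the paper: both arguments pigeonhole the lower bound of Lemma \ref{nmp} against the at most $n/d$ irreducible factors of $P$ in a given degree $d$, and both extract the bound $q^d\le 2nq$ from $q/(q-1)\le 2$. The only difference is cosmetic --- you take the least $d$ at which the irreducible count beats the factor count, whereas the paper takes the largest $m$ such that every irreducible of degree $m-1$ divides $P$ --- and the two choices lead to the same estimate.
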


\begin{proof}
Let $m\ge 1$ be the largest number such that every irreducible polynomial of degree $m-1$ divides $P$. Let us check that $q^m\le 2nq$; the case $m=1$ being trivial, we assume $m\ge 2$. By Lemma \ref{nmp}, there are $\ge (q^{m-1}-q^{m-2})/(m-1)$ monic irreducible polynomials of degree $m-1$. Hence their product, which has degree $\ge q^{m-1}-q^{m-2}$, divides $P$. Thus $q^{m-1}-q^{m-2}\le n$.
We have $1-q^{-1}\ge 1/2$; thus $\frac12q^mq^{-1}\le n$, that is $q^{m}\le 2nq$.


Some irreducible polynomial of degree $m$ does not divide $P$, hence the quotient provides a field quotient of cardinal $q^m\le 2nq$ in which $P$ survives. 
\end{proof}


\begin{cor}\label{corqfc}
Let $F$ be a field with $q$ elements and $P$ a nonzero polynomial in $F[t_1,\dots,t_k]$, of degree $\le n$ with respect to each indeterminate. Then $P$ survives in a quotient field of cardinal $\le (2n)^kq$.
\end{cor}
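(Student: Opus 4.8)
The plan is to induct on the number $k$ of variables, using Lemma \ref{surv} both as the base case and as the engine of the inductive step. The base case $k=1$ is precisely Lemma \ref{surv}, which yields a quotient field of cardinal $\le 2nq = (2n)^1 q$, so the asserted bound holds.

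For the inductive step I would view $F[t_1,\dots,t_k]$ as $F[t_1,\dots,t_{k-1}][t_k]$ and expand
\[
P = \sum_{i=0}^{n} P_i(t_1,\dots,t_{k-1})\, t_k^i,
\]
where each coefficient $P_i$ lies in $F[t_1,\dots,t_{k-1}]$ and has degree $\le n$ with respect to each of its variables. Since $P\neq 0$, at least one $P_i$ is nonzero; fix such an index $i_0$. By the inductive hypothesis applied to $P_{i_0}$ (a nonzero polynomial in $k-1$ variables of degree $\le n$ in each), there is a quotient field $F'$ of $F[t_1,\dots,t_{k-1}]$, of cardinal $q' \le (2n)^{k-1}q$, in which $P_{i_0}$ survives.

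The next step is to push $P$ forward along the quotient map $F[t_1,\dots,t_{k-1}][t_k] \to F'[t_k]$ induced by $F[t_1,\dots,t_{k-1}] \to F'$. The image $\bar P$ is nonzero, since its $t_k^{i_0}$-coefficient is the nonzero image of $P_{i_0}$, and $\bar P$ still has degree $\le n$ in $t_k$ (reduction can only lower degrees). As $F'$ is a finite field with $q'\le (2n)^{k-1}q$ elements, Lemma \ref{surv} applied to $\bar P\in F'[t_k]$ produces a quotient field $F''$ of $F'[t_k]$, of cardinal $\le 2nq' \le (2n)^k q$, in which $\bar P$ survives. Composing the two reductions, $F''$ is a quotient field of $F[t_1,\dots,t_k]$ in which $P$ survives, of the required cardinal, which completes the induction.

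The only point demanding care is bookkeeping rather than any genuine obstacle: one must confirm that a field quotient of a field quotient is again a field quotient, so that the two reductions compose into a single quotient of $F[t_1,\dots,t_k]$, and that every field quotient of $F[t_1,\dots,t_{k-1}]$ is genuinely finite. The latter holds because such a quotient is a finitely generated $F$-algebra that is a field, hence a finite extension of the finite field $F$, so that Lemma \ref{surv} is indeed applicable over $F'$. With these in place the cardinal bound multiplies cleanly as $2n\cdot(2n)^{k-1}=(2n)^k$, matching the claim.
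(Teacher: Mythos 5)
Your proof is correct and follows essentially the same route as the paper: induct on $k$, expand $P$ in powers of $t_k$, apply the inductive hypothesis to a nonzero coefficient to get a quotient field $F'$ with $\#F'\le(2n)^{k-1}q$, and then apply Lemma \ref{surv} to the image of $P$ in $F'[t_k]$. The only cosmetic difference is that the paper starts the induction at $k=0$ (where the statement is trivial) rather than at $k=1$, and the finiteness of the intermediate quotient field is already guaranteed by the cardinality bound in the inductive hypothesis, so your closing remark about finitely generated field extensions of finite fields, while valid, is not needed.
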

\begin{proof}
Induction on $k$. The result is trivial for $k=0$. Write \[P=\sum_{i=0}^nP_i(t_1,\dots,t_{k-1})t_k^i.\] Some $P_i$ is nonzero; fix such $i$. Then there exists, by induction, some quotient field $L$ of $F[t_1,\dots,t_{k-1}]$ of cardinal $\le (2n)^{k-1}q$ in which $P_i$ survives. Then the image of $P$ in $L[t_k]$ has degree $\le n$ and is nonzero; hence by Lemma \ref{surv}, it survives in a quotient field of cardinal $2n((2n)^{k-1}q)=(2n)^kq$.
\end{proof}


\section{Conclusion of the proof}

\begin{prop}\label{rgp}
Every finitely generated group that is linear over a field of characteristic $p$ has at most exponential residual girth.
\end{prop}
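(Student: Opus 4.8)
The plan is to prove Proposition~\ref{rgp} by exploiting the finitely generated subring of the field over which $\Gamma$ is linear, and then using the counting Corollary~\ref{corqfc} to produce small finite field quotients in which the $n$-ball survives injectively.

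\medskip

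\textbf{Setup and reduction to ring entries.} First I would fix a faithful representation $\Gamma\hookrightarrow\GL_d(\mathbf{F})$ over a field $\mathbf{F}$ of characteristic $p$, and a finite symmetric generating set $S$. Since $\Gamma$ is finitely generated, all matrix entries of the generators (and of the inverses, already in $S$) lie in a finitely generated subring $A\subset\mathbf{F}$. Because $\mathrm{char}(\mathbf{F})=p$, this ring $A$ is a finitely generated $\mathbf{F}_p$-algebra, so $A$ is a quotient of a polynomial ring $\mathbf{F}_p[t_1,\dots,t_k]$ for some $k$ (equivalently, $A=\mathbf{F}_p[a_1,\dots,a_k]$ for finitely many generators $a_i$). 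The key point is that every entry of every element of $S^n$ is obtained by multiplying at most $n$ of the generator matrices, hence each such entry is a polynomial expression in $a_1,\dots,a_k$ whose degree in each indeterminate is bounded by a linear function of $n$ (roughly $Cn$ for a constant $C$ depending on $d$ and the generators), since matrix multiplication is bilinear and composes degrees additively.

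\medskip

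\textbf{Separating the $n$-ball by a single polynomial.} For each nonidentity $g\in S^n$, faithfulness of the representation means $g\neq 1$ in $\GL_d(A)$, so some entry of $g-1$ is a nonzero element of $A$, i.e.\ a nonzero polynomial in the $a_i$. I would collect, over all $g\in S^n\smallsetminus\{1\}$, these distinguishing entries and take their product $P\in\mathbf{F}_p[t_1,\dots,t_k]$ (lifting to the polynomial ring). The degree of $P$ in each variable is at most (number of elements of $S^n$) times $Cn$; since $|S^n|$ is at most exponential in $n$, this degree $N$ is at most exponential in $n$. The crucial feature is that $P$ is \emph{nonzero in $A$}, hence nonzero as a polynomial provided we choose the lift so that it does not lie in the defining ideal of $A$ --- which I would arrange by working directly with the relations, or more cleanly by noting that a quotient field of $A$ is in particular a quotient field of $\mathbf{F}_p[t_1,\dots,t_k]$ through which $A$ factors, so it suffices to find a quotient field of $\mathbf{F}_p[t_1,\dots,t_k]$ killing none of the chosen entries.

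\medskip

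\textbf{Producing the small finite quotient.} Now I apply Corollary~\ref{corqfc} with $q=p$ to the polynomial $P$: it survives in a quotient field $L$ of $\mathbf{F}_p[t_1,\dots,t_k]$ of cardinality at most $(2N)^k p$. Since each factor entry of $P$ survives, no $g\in S^n\smallsetminus\{1\}$ maps to $1$ under the induced homomorphism $\Gamma\to\GL_d(L)$; thus the $n$-ball injects into the finite group $\GL_d(L)$, whose order is at most $|L|^{d^2}$. Because $|L|\le(2N)^k p$ with $N$ at most exponential in $n$ and $k,p,d$ fixed, the order $|L|^{d^2}$ is at most exponential in $n$. The image $\Gamma/\ker$ is a finite group in which the $n$-ball injects, and its order bounds the residual girth, giving the desired exponential upper bound.

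\medskip

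\textbf{Main obstacle.} The part I expect to require the most care is the bookkeeping of the degree bound: I must ensure that taking the product over the (exponentially many) elements of $S^n$ keeps the per-variable degree $N$ at most \emph{exponential} rather than worse, and that Corollary~\ref{corqfc}'s bound $(2N)^k p$ then stays exponential after raising to the power $d^2$. This works precisely because $(2N)^{kd^2}$ is exponential when $N$ is exponential and $k,d$ are constants --- but one must track that $k$ does not grow with $n$ (it is fixed by the fixed generating set) and that lifting the distinguishing entries to genuine nonzero polynomials over $\mathbf{F}_p[t_1,\dots,t_k]$ is legitimate, i.e.\ that a field quotient of the polynomial ring descends to the ring $A$ compatibly with the representation.
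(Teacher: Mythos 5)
Your overall strategy---bound the per-variable degrees of the entries of elements of $S^n$, multiply together one nonzero distinguishing entry of $g-1$ for each nontrivial $g$ in the ball, and feed the product to Corollary \ref{corqfc}---is the same as the paper's. But there is a genuine gap in your reduction step. You present the ring of entries as a quotient $A=\mathbf{F}_p[t_1,\dots,t_k]/I$ and then propose to apply Corollary \ref{corqfc} to a lift of $P$ in the polynomial ring. The corollary hands you a maximal ideal $\mathfrak{m}$ of $\mathbf{F}_p[t_1,\dots,t_k]$ avoiding $P$, but nothing forces $I\subset\mathfrak{m}$, so the finite field $L=\mathbf{F}_p[t_1,\dots,t_k]/\mathfrak{m}$ need not be a quotient of $A$ at all, and there is no induced homomorphism $\Gamma\to\GL_d(L)$. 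Your remark that ``a quotient field of $A$ is in particular a quotient field of $\mathbf{F}_p[t_1,\dots,t_k]$'' is true but runs in the wrong direction: you need the converse, and it fails (e.g.\ for $A=\mathbf{F}_p[t_1,t_2]/(t_1t_2-1)$, a maximal ideal containing $t_1$ does not descend to $A$). Moreover, when $I\neq 0$ a lift of a nonzero element of $A$ is only well defined modulo $I$, and ``choosing the lift outside $I$'' does not help, since survival in $L$ is a property of the coset only when $I\subset\mathfrak{m}$.

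The paper avoids this by arranging for an honest polynomial ring before anything else: the field generated by the entries is a finite extension, of some degree $b$, of a purely transcendental extension $F_q(t_1,\dots,t_k)$ (choose a transcendence basis), and restriction of scalars embeds $\Gamma$ into $\GL_{bd}(F_q(t_1,\dots,t_k))$. After clearing denominators the generators lie in $\GL_d(F_q[t_1,\dots,t_k][Q^{-1}])$, and any quotient field of the polynomial ring in which $Q$ survives \emph{does} induce a homomorphism on this localization---which is exactly why the paper multiplies the product of distinguishing entries by $Q$ before invoking Corollary \ref{corqfc}. With that reduction in place, the rest of your bookkeeping (per-variable degree of $P$ exponential in $n$, hence $(2N)^{kd^2}$ still exponential with $k,d$ fixed) is correct. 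One could instead patch your version by running Noether normalization on $A$ and controlling residue field degrees via the rank of $A$ over a polynomial subring, but that is additional work the proposal does not carry out.
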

\begin{proof}
Such a group embeds into $\GL_d(K)$ where $K$ is an extension of degree $b$ of some field $K'=F_q(t_1,\dots,t_k)$, and hence embeds into $\GL_{bd}(K')$. Hence it is no restriction to assume that the group is contained in $\GL_d(F_q(t_1,\dots,t_k))$. We let $S$ be a finite symmetric generating subset with 1; it is actually contained in $\GL_d(F_q[t_1,\dots,t_k][Q^{-1}])$ for some nonzero polynomial $Q$.

Write $S=Q^{-\lambda}T$ with $\lambda$ a non-negative integer and $T\subset\mathrm{Mat}_d(F_q[t_1,\dots,t_k])$; write $s=\#(S)=\#(T)$. If $x$ is a matrix, let $b(x)$ be the product of all its nonzero entries (thus $b(0)=1$).
Let $m$ be such that every entry of every element of $T$ has degree $\le m$ with respect to each variable. Then in $T^{2n}$, every entry of every element has degree $\le 2nm$ with respect to each variable. Define $x_n=\prod_{y\in T^{2n}}b(y-1)$. Thus $x_n$ is a product of at most $d^2s^{2n}$ polynomials of degree $\le 2nm$ with respect to each variable. Define $x'_n=x_nQ$; assume that $Q$ has degree $\le\delta$ with respect to each variable, so that $x'_n$ has degree $\le 2d^2mns^{2n}+\delta$ with respect to each variable.


Then, by Corollary \ref{corqfc}, $x'_n$ survives in a finite field $F_n$ of cardinal $q_1\le q(4d^2mns^{2n}+2\delta)^k$. Thus $S^n$ is mapped injectively into $\GL_d(F_n)$, which has cardinal \[\le q_1^{d^2}\le q^{d^2}(4d^2mns^{2n}+2\delta)^{kd^2}.\] Since $m,d,k,s,q$ are fixed, this grows at most exponentially with respect to $n$.
\end{proof}

\begin{prop}\label{rg0}
Every finitely generated group that is linear over a field of characteristic $0$ has at most exponential residual girth.
\end{prop}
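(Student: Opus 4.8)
The plan is to reduce the characteristic-zero case to the positive-characteristic case already handled in Proposition \ref{rgp}. A finitely generated subgroup of $\GL_d(K)$ with $K$ of characteristic $0$ lives inside $\GL_d(A)$ for some finitely generated subring $A\subset K$, and $A$ can be taken to be a finitely generated $\Z$-algebra. The key idea is that such a ring, after localizing away from the finitely many ``bad'' denominators appearing in the generators and their products over the $n$-ball, admits finite quotient fields of positive characteristic into which the relevant element survives; reduction modulo a prime then lands us in exactly the situation of Proposition \ref{rgp}.

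First I would fix a finite symmetric generating set $S$ containing $1$ and realize the group inside $\GL_d(A)$, where $A=\Z[t_1,\dots,t_k][Q^{-1}]$ is a finitely generated $\Z$-algebra and $Q\in\Z[t_1,\dots,t_k]$ clears all denominators of entries of $S$. Writing $S=Q^{-\lambda}T$ with $T\subset\mathrm{Mat}_d(\Z[t_1,\dots,t_k])$ exactly as in the proof of Proposition \ref{rgp}, I would again form the single nonzero polynomial $x'_n=Q\prod_{y\in T^{2n}}b(y-1)$ whose survival under a homomorphism guarantees that $S^n$ injects. The only new feature is the presence of the integer coefficients: $x'_n$ is now a nonzero element of $\Z[t_1,\dots,t_k]$, so before invoking Corollary \ref{corqfc} over a finite field I must first pass to characteristic $p$. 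The natural move is to choose a prime $p$ not dividing the (bounded number of) leading-type coefficients of $x'_n$ so that $x'_n$ remains nonzero in $\F_p[t_1,\dots,t_k]$, and then apply Corollary \ref{corqfc} over $\F_p$ to obtain a finite field quotient of controlled size in which $x'_n$ survives.

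The bookkeeping is that the size of the resulting finite field, and hence of the image in $\GL_d$, must stay at most exponential in $n$. The corollary gives a quotient of $\F_p[t_1,\dots,t_k]$ of cardinal $\le p\,(2\deg)^k$ with $\deg$ growing like $s^{2n}$, so the target group has order roughly $\bigl(p\,(2\deg)^k\bigr)^{d^2}$; as long as $p$ can be chosen bounded independently of $n$ (or at worst growing polynomially), this is exponential. Here is the one genuine subtlety I expect to be the main obstacle: ensuring a single prime $p$ works uniformly for all $n$, or at least that the needed $p$ does not blow up with $n$. The cleanest way around it is to pick $p$ once and for all so that none of the finitely many nonzero integer coefficients that can appear in any monomial of the entries of elements of $T$ (together with those of $Q$) vanish mod $p$; since each entry of each element of $T^{2n}$ is an integer combination of products of entries of $T$, reducing mod such a $p$ commutes with all the ring operations and keeps $x'_n$ nonzero. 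With $p$ fixed, the reduction $\Z[t_1,\dots,t_k]\to\F_p[t_1,\dots,t_k]$ sends the whole group into $\GL_d(\F_p(t_1,\dots,t_k))$ faithfully on the $n$-ball, and Proposition \ref{rgp} (or directly Corollary \ref{corqfc}) finishes the job with an exponential bound, completing the proof.
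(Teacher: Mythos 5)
Your overall strategy---reduce the integral data modulo a prime and then run the characteristic-$p$ argument of Proposition \ref{rgp} (via Corollary \ref{corqfc}) on the polynomial $x'_n$---is exactly the paper's, but your resolution of the ``one genuine subtlety'' you identify is wrong, and that subtlety is the entire content of the characteristic-$0$ case. You propose to fix a single prime $p$ not dividing any of the finitely many nonzero coefficients appearing in entries of elements of $T$ (and of $Q$), and argue that since reduction mod $p$ commutes with the ring operations it ``keeps $x'_n$ nonzero''. It does not: a coefficient of an entry of an element of $T^{2n}$ is an integer \emph{sum} of up to $d^{2n-1}M^{2n}$ products of the original coefficients, and such a sum can be divisible by $p$ even though none of its terms is; then a nonzero entry of $y-1$ dies mod $p$ and the factor $b(y-1)$, hence $x'_n$, is killed. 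The simplest counterexample is $\Gamma=\Z$ embedded in $\GL_2(\Z)$ by $n\mapsto\left(\begin{smallmatrix}1&n\\0&1\end{smallmatrix}\right)$: for any fixed prime $p$ the element $\left(\begin{smallmatrix}1&p\\0&1\end{smallmatrix}\right)$ lies in the $p$-ball and reduces to the identity mod $p$, so no single prime makes every ball $S^n$ inject. (This is precisely the unipotent phenomenon behind Gromov's ``superfluous additional assumption'' mentioned in the introduction.)

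The correct move, which the paper makes, is to let the prime depend on $n$. Every coefficient of every entry of an element of $T^{2n}$ has absolute value at most $d^{2n-1}(RM)^{2n}$, where $R$ bounds the coefficients and $M$ the number of nonzero coefficients of entries of $T$; Bertrand's postulate then supplies a prime $p_n\in[2d^{2n-1}(RM)^{2n},4d^{2n-1}(RM)^{2n}]$, and for $n$ large this also exceeds the prime divisors of the integer denominator $r$ (which you omitted from your setup, a minor point) and the least nonzero coefficient of $Q$, so reduction mod $p_n$ keeps $x'_n$ nonzero and maps $S^{2n}$ injectively into $\GL_d((\Z/p_n\Z)[t_1,\dots,t_k][Q^{-1}])$. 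The prime $p_n$ grows exponentially rather than staying bounded or polynomial, but that is harmless: the resulting bound ${p_n}^{d^2}(4d^2mns^{2n}+2\delta)^{kd^2}$ is still exponential in $n$, which is all the proposition requires. So the shape of your argument is right, but as written the key step fails and must be replaced by this $n$-dependent choice of prime.
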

\begin{proof}
Similarly as in the proof of Proposition \ref{rgp}, we can suppose that the group is contained in $\GL_d(\Q(t_1,\dots,t_k))$. We let $S$ be a finite symmetric generating subset with 1; it is actually contained in $\GL_d(\Z[t_1,\dots,t_k][r^{-1}Q^{-1}])$ for some nonzero integer $r\ge 1$ and nonzero polynomial $Q$ with coprime coefficients.

Write $S=(Qr)^{-\lambda}T$ with $\lambda$ a non-negative integer and $T\subset\mathrm{Mat}_d(\Z[t_1,\dots,t_k])$; write $s=\#(S)=\#(T)$. Let $R$ be an upper bound on coefficients of entries of elements of $T$, and let $M$ be an upper bound on the number of nonzero coefficients of entries of elements of $T$. Then any product of $2n$ elements of $T$ is a sum of $\le M^{2n}$ monomials, each with a coefficient of absolute value $\le R^{2n}$. Since any entry of an element in $T^{2n}$ is a sum of at most $d^{2n-1}$ such products, we deduce that the coefficients of entries of elements of $T^{2n}$ are $\le d^{2n-1}R^{2n}M^{2n}$. There exists a prime $p_n\in [2d^{2n-1}(RM)^2n,4d^{2n-1}(RM)^{2n}]$. There exists $n_0$ such that for every $n\ge n_0$, $2d^{2n-1}(RM)^2n$ is greater than any prime divisor of $r$, and $2d^{2n-1}(RM)^2n$ is greater than the lowest absolute value of a nonzero coefficient of $Q$. Now we always assume $n\ge n_0$. Then $S^{2n}$ is mapped injectively into $\GL_d((\Z/p_n\Z)[t_1,\dots,t_k][Q^{-1}])$.


Let $m$ be such that every entry of any element of $T$ has degree $\le m$ with respect to each variable. The previous proof provides a quotient $\GL_d(F_n)$ of $\GL_d((\Z/p_n\Z)[t_1,\dots,t_k][Q^{-1}])$ in which $S^n$ is mapped injectively, such that $\GL_d(F_n)$ has cardinal 
\[\le {p_n}^{d^2}(4d^2mns^{2n}+2\delta)^{kd^2}\]
Here $m,d,s,k$ are independent of $n$. The latter number is 
\[\le {(4d^{-1}(dRM)^{2n})}^{d^2}(4d^2mns^{2n}+2\delta)^{kd^2},\]
which grows at most exponentially with respect to $n$.
\end{proof}

\begin{proof}[Proof of Theorem \ref{main}]
First assume that $\Gamma$ is linear over some field.
By Propositions \ref{rgp} and \ref{rg0}, the residual girth, and hence the systolic growth, is at most exponential. If $\Gamma$ is not virtually nilpotent, then by the Tits-Rosenblatt alternative, it contains a free subsemigroup on 2 generators and hence has exponential growth, and therefore has at least exponential systolic growth and residual girth.

Now assume that $\Gamma$ is linear over some product of fields. Let $A$ be the ring generated by entries of $\Gamma$. This is a finitely generated reduced commutative ring; hence it has finitely many minimal prime ideals, whose intersection equals the set of nilpotent elements and hence is reduced to zero. Therefore $\Gamma$ embeds into a finite product of matrix group over various fields. We conclude that $\Gamma$ has at most exponential residual girth, using the following two general facts:
\begin{itemize}
\item suppose that $\Gamma_1,\dots,\Gamma_k$ are finitely generated groups and $\Gamma_i$ has residual girth asymptotically bounded above by some function $u_i\ge 1$, then the residual girth of $\prod_{i=1}^k\Gamma_i$ is asymptotically bounded above by $\prod u_i$;
\item if $\Lambda_1\subset\Lambda_2$ are finitely generated groups then the residual girth of $\Lambda_1$ is asymptotically bounded above by that of $\Lambda_2$.
\end{itemize}
\end{proof}




\end{document}